\documentclass[reqno,12pt]{amsart}
\usepackage{amscd,amsfonts,amssymb}
\usepackage[T2A]{fontenc}
\usepackage[english]{babel}
\numberwithin{equation}{section}
\newtheorem{Theorem}{Theorem}[section]

\theoremstyle{definition}

\theoremstyle{remark}

\author{A.A. Kon'kov}
\address{Department of Differential Equations,
Faculty of Mechanics and Mathematics,
Mo\-s\-cow Lo\-mo\-no\-sov State University,
Vorobyovy Gory,
Moscow, 119992 Russia.
}
\email{konkov@mech.math.msu.su}
\author{A.E. Shishkov}
\address{
Center of Nonlinear Problems of Mathematical Physics,
RUDN University,
Mi\-klu\-k\-ho-Mak\-la\-ya str. 6,
Moscow, 117198 Russia.
}
\email{aeshkv@yahoo.com}
\title[On a necessary condition]{On a necessary condition for removing singularities of solutions of nonlinear elliptic inequalities}
\keywords{Differential inequalities; Nonlinearity; Removable singular sets}
\subjclass{35B44, 35B08, 35J30, 35J70}
\date{}

\begin{document}

\begin{abstract}
We study solutions of the differential inequality
$$
	\Delta^{m / 2} u \ge f (x) g (u)
	\quad
	\mbox{in } B_1 \setminus \{ 0 \},
$$
where $m \ge 2$ is an even integer, $f$ and $g$ are some functions, and $B_1$ is an open unit ball in $R^n$, $n \ge 2$, centered at zero.
Our aim is to obtain a necessary condition for a singularity at zero to be removable for any solution of this inequality.
\end{abstract}

\maketitle

\section{Introduction}
We deal with differential inequalities of the form
\begin{equation}
	\sum_{|\alpha| = m}
	\partial^\alpha
	a_\alpha (x, u)
	\ge
	f (x)
	g (|u|)
	\quad
	\mbox{in } B_1 \setminus \{ 0 \},
	\label{1.1}
\end{equation}
where $a_\alpha$ are Caratheodory functions such that
$$
	|a_\alpha (x, \zeta)| 
	\le 
	C|\zeta|,
	\quad
	|\alpha| = m,
$$
with some constant $C > 0$ for almost all $x \in B_1$ 
and for all $\zeta \in {\mathbb R}$.
In so doing, $f \in L_{loc, \infty} (\overline{B_1} \setminus \{ 0 \})$ and $g : [0, \infty) \to [0, \infty)$ is a non-decreasing function such that $g (0) = 0$ and $g (\zeta) > 0$ for all $\zeta \in (0, \infty)$.
As is customary, by $B_r$ we mean the open ball of radius $r > 0$ centered at zero.

A function $u$ is a solution of~\eqref{1.1} if 
$u \in L_1 (B_1 \setminus \{ 0 \})$ 
and
$f (x) {g (|u|)} \in L_1 (B_1 \setminus \{ 0 \})$
and, moreover,
\begin{equation}
	\int_{B_1}
	\sum_{|\alpha| = m}
	(-1)^m
	a_\alpha (x, u)
	\partial^\alpha
	\varphi
	\,
	dx
	\ge
	\int_{B_1}
	f (x)
	g (|u|)
	\varphi
	\,
	dx
	\label{1.2}
\end{equation}
for all non-negative functions $\varphi \in C_0^\infty (B_1 \setminus \{ 0 \})$.
We say that $u$ has a removable singularity at zero if
$u \in L_1 (B_1)$,
$f (x) {g (|u|)} \in L_1 (B_1)$, 
and inequality~\eqref{1.2} is valid for all non-negative functions $\varphi \in C_0^\infty (B_1)$.

In paper~\cite{KSFCAA}, it is arisen the condition
\begin{equation}
	\int_1^\infty
	g^{- 1 / m} (\zeta)
	\zeta^{1 / m - 1}
	\,
	d\zeta
	<
	\infty
	\label{1.3}
\end{equation}
which was required for singularity at zero to be removable for any solution of~\eqref{1.1}.
For $m = 2$, it is easy to see that~\eqref{1.3} is equivalent to the well-known Keller--Osserman blow-up condition~\cite{Keller, Osserman, KSNonlinearity}.
For $m = 2$, relation~\eqref{1.3} also appeared in~\cite{meCMFI}, where the differential inequality
\begin{equation}
	\Delta u
	\ge
	a (x, u)
	\quad
	\mbox{in } B_1 \setminus \{ 0 \}
	\label{1.4}
\end{equation}
was considered. In this paper, it was assumed that $a$ is a non-negative function such that
$$
	q(r) g(\zeta) 
	\le 
	\operatorname*{ess\,inf}_{
		x \in B_{2 r} \setminus B_{r / 2}
	}
	a (x, r^{2 - n} \zeta)
$$
with a measurable function $q : (0, 1 / 2] \to [0, \infty)$ and a non-decreasing continuous function $g : (0, \infty) \to (0, \infty)$ for all real numbers $0 < r \le 1 / 2$ and $\zeta > 1$.
In particular, it was shown that any positive solution of~\eqref{1.4} has a removable singularity at zero if
\begin{equation}
	\int_1^\infty
	(g (\zeta) \zeta)^{- 1 / 2}
	\,
	d\zeta
	<
	\infty,
	\label{1.5}
\end{equation}
\begin{equation}
	\int_0^{1/2}
	r^{n - 1}
	q (r)
	dr
	=
	\infty,
	\label{1.6}
\end{equation}
and
\begin{equation}
	\int_0^{1/2}
	r^{n - 1}
	\operatorname*{ess\,inf}_{
		x \in B_{2 r} \setminus B_{r / 2}
	}
	a (x, r^{2 - n} \varepsilon)
	dr
	=
	\infty
	\label{1.7}
\end{equation}
for all real numbers $\varepsilon > 0$.

In the case of the inequality
$$
	\Delta u \ge u^\lambda
	\quad
	\mbox{in } B_1 \setminus \{ 0 \},
$$
where $B_1$ is a unit ball in $\mathbb R^n$, $n \ge 3$,
conditions~\eqref{1.5}--\eqref{1.7} with $q (r) = r^{\lambda (2 - n)}$ and $g (\zeta) = \zeta^\lambda$ yields
$$
	\lambda \ge \frac{n}{n - 2}.
$$
This coincides with the well-known Brezis--V\'eron condition~\cite{BV}.

For the more general inequality
\begin{equation}
	\Delta u \ge |x|^s u^\lambda
	\quad
	\mbox{in } B_1 \setminus \{ 0 \},
	\label{1.8}
\end{equation}
conditions~\eqref{1.5}--\eqref{1.7} with $q (r) = r^{\lambda (2 - n) + s}$ and $g (\zeta) = \zeta^\lambda$ take the form $\lambda > 1$ and $s \le \lambda (n - 2) - n$.

The results of~\cite{meCMFI} can also be applied to inequalities with non-power nonlinearities for an unknown function $u$. 
For example, in the case of the inequality
$$
	\Delta u \ge u^{n / (n - 2)} \log^\mu (1 + u)
	\quad
	\mbox{in } B_1 \setminus \{ 0 \},
$$
where $B_1$ is a unit ball in $\mathbb R^n$, $n \ge 3$,
from~\eqref{1.5}--\eqref{1.7} with $q (r) = r^{- n} / \log (e^3 + r^{2 - n})$ and $g (\zeta) = \zeta^{n / (n - 2)} / \log (e^3 + \zeta)$, it follows that any positive solution 
has a removable singularity at zero if $\mu \ge -1$. This is in good agreement with the result of paper~\cite{VV}.

Now, let us examine the critical exponent $\lambda = 1$ in the right-hand side of~\eqref{1.8}. Namely, consider the inequality
\begin{equation}
	\Delta u \ge |x|^s u \log^\mu (1 + u)
	\quad
	\mbox{in } B_1 \setminus \{ 0 \}.
	\label{1.10}
\end{equation}
Putting $q (r) = r^{2 - n + s}$ and $g (\zeta) = \zeta \log^\mu (1 + \zeta)$, 
we obtain from \eqref{1.5}--\eqref{1.7} that any positive solution of~\eqref{1.10} has a removable singularity at zero if $s \le - 2$ and $\mu > 2$.

All the above conditions are exact.
It can easily be seen that~\eqref{1.5} is equivalent to~\eqref{1.3} with $m = 2$.
Condition~\eqref{1.3} is obviously not sufficient to remove a singularity since 
we must also take into account the behavior of the function $f$ in a neighborhood of zero.
Moreover, in the case of linear equations, we need to impose some restrictions on the growth of solutions~\cite{Marcus}.
However, it can be shown that condition~\eqref{1.3} is necessary. 
The proof of this fact is the subject of Theorem~\ref{T2.1} present to your attention.
 
\section{Main results}

\begin{Theorem}\label{T2.1}
Let $g : (0, \infty) \to (0, \infty)$ be a non-decreasing continuous function such that
\begin{equation}
	\int_1^\infty
	g^{- 1 / m} (\zeta)
	\zeta^{1 / m - 1}
	\,
	d\zeta
	=
	\infty.
	\label{T2.1.1}
\end{equation}
Then the inequality
\begin{equation}
	\Delta^{m / 2} u \ge f (x) g (u)
	\quad
	\mbox{in } B_1 \setminus \{ 0 \},
	\label{T2.1.2}
\end{equation}
where $m \ge 2$ is an even integer, has a positive solution with a non-removable singularity at zero for any $f \in L_{loc, \infty} (\overline{B_1} \setminus \{ 0 \})$.
\end{Theorem}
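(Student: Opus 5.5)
The plan is to build a radial solution $u(x)=w(|x|)$ from a one–dimensional comparison problem, and to make the singularity non-removable by forcing a Dirac mass at the origin.

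\emph{Reduction to an ODE.} First replace $f$ by the radial majorant
$$F(r)=1+\operatorname*{ess\,sup}_{r\le|x|\le 1}|f(x)|, \qquad 0<r\le 1.$$
This function is finite, non-increasing and locally bounded on $(0,1]$, since $f\in L_{loc,\infty}(\overline{B_1}\setminus\{0\})$. Because $g>0$, any positive $u$ with $\Delta^{m/2}u\ge F(|x|)\,g(u)$ also solves \eqref{T2.1.2}. For radial $u=w(r)$, $\Delta^{m/2}$ is the iterate of $w\mapsto r^{1-n}(r^{n-1}w')'$. I would then look for $w$ solving the Cauchy problem
$$\Delta^{m/2}w=F(r)\,g(w)$$
integrated \emph{backwards} from $r=1$ toward $r=0$. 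The data at $r=1$ are chosen so that $w$ and all the intermediate quantities $w,\Delta w,\dots,\Delta^{m/2-1}w$ are positive and grow as $r$ decreases. A standard monotonicity argument for the iterated radial Laplacian should preserve these signs on the whole maximal interval of existence.

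\emph{No blow-up at positive radius (the key use of \eqref{T2.1.1}).} I must show that the maximal interval is all of $(0,1]$, i.e. that $w$ cannot blow up at some $r_0>0$. On $[r_0,1]$ the weight $F$ is bounded by $F(r_0)$, and the factors $r^{1-n}$ and $r^{n-1}$ are comparable to constants. So blow-up at $r_0$ would give a solution of $v^{(m)}\le C\,g(v)$, with all lower derivatives of one sign, blowing up in finite time. Multiplying by $v'$ and integrating successively, in the Keller--Osserman fashion generalized to order $m$ as in~\cite{KSFCAA}, yields
$$v'\lesssim \bigl(v^{m-1}g(v)\bigr)^{1/m}.$$
The blow-up time is then bounded below by
$$\int^\infty g^{-1/m}(\zeta)\,\zeta^{1/m-1}\,d\zeta,$$
which is infinite by \eqref{T2.1.1}. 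This contradiction shows that $w$ exists, is smooth and positive on $(0,1]$, and solves the inequality there.

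\emph{Non-removability.} I would arrange the data at $r=1$ so that the flux of $\Delta^{m/2-1}w$ through small spheres does not vanish as $r\to0$:
$$\lim_{r\to 0}r^{n-1}\bigl(\Delta^{m/2-1}w\bigr)'(r)=-c<0 .$$
Equivalently, $w$ carries a nonzero multiple of the polyharmonic fundamental solution. Testing the weak inequality \eqref{1.2} with $\varphi\in C_0^\infty(B_1)$ equal to $1$ near zero then produces a term $-c\,\varphi(0)$. This is incompatible with the inequality holding on all of $B_1$, so the singularity is non-removable. If instead $w\notin L_1(B_1)$ or $F g(w)\notin L_1(B_1)$, non-removability is immediate.

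\emph{Main obstacle.} The hard part is reconciling the two requirements. The flux must stay bounded away from zero, yet $F(r)\,g(w)$ may be non-integrable near $0$ with weight $r^{n-1}$, since $f$ is arbitrary there. In that case the flux tends to $-\infty$, and the singularity is even more clearly non-removable. The real difficulty is the opposite scenario: the integrated right-hand side could cancel the prescribed flux, making $w$ bounded near $0$. I would first try to exclude this by choosing $w(1)$ small and the initial flux large, and comparing with the explicit positive fundamental-type solution. If that fails, I would exploit the freedom to make $F$ larger, which only helps positivity and monotonicity.
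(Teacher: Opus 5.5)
Your sign-preservation argument for the backward Cauchy problem is fine. The step you call ``no blow-up at positive radius'', however, is a genuine gap. It is the only place where \eqref{T2.1.1} enters and it carries essentially all the difficulty, yet it is only asserted.

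The bound $v'\lesssim (v^{m-1}g(v))^{1/m}$ does not come from multiplying by $v'$ and integrating. That energy identity is special to $m=2$. Already for $m=4$ one gets $v^{(4)}v'=\bigl(v'''v'-\frac{1}{2}(v'')^2\bigr)'$, which controls a combination of derivatives, not $v'$.

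Nor can such a bound hold with a constant independent of the data. Take $g\equiv 1$, which satisfies \eqref{T2.1.1}. Then $v(s)=1+As^{m-1}$ satisfies $v^{(m)}=0\le Cg(v)$ with all derivatives nonnegative, while $v'(1)\,(v(1)^{m-1}g(v(1)))^{-1/m}\sim (m-1)A^{1/m}\to\infty$. At best you can hope for an asymptotic estimate with a data-dependent threshold. Proving it requires controlling the lower-order derivatives accumulated before $v$ reaches a given level. That is a real higher-order Keller--Osserman-type estimate in the divergent case, which is exactly what the theorem is about. The introduction describes the reference you invoke as using \eqref{1.3} in the convergent case, for removability.

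A smaller point concerns the reduction to $v^{(m)}\le Cg(v)$, which is not just a matter of comparable weights. Write $w_k$ for the $k$-th radial iterate and $s=1-r$. Then $\partial_s^2 w_k=w_{k+1}+\frac{n-1}{r}\partial_s w_k$, and the first-order term has the unfavourable sign. It must be absorbed via $\partial_s(r^{n-1}\partial_s w_k)=r^{n-1}w_{k+1}$.

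The paper never proves an a priori bound for the $m$-th order problem. It builds an explicit supersolution $v$ from the first-order problem \eqref{PT2.2.11}, whose global existence on $(0,r_0]$ follows from \eqref{T2.1.1} after the substitution $\zeta=1+\beta t^m$. It proves $v\ge A_{m/2}v$ through the difference estimate \eqref{PT2.2.8} and the induction of Step~3. It then gets a solution $0\le w\le v$ of $w=A_{m/2}w$ by monotone iteration from $w_1=0$. Something of this kind, namely a global supersolution plus a comparison for the Volterra-type operator $A_{m/2}$, is what your shooting scheme lacks.

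On non-removability, your ``main obstacle'' is not one. Since $Fg(w)\ge 0$, the flux $r^{n-1}(\Delta^{m/2-1}w)'(r)=(\Delta^{m/2-1}w)'(1)-\int_r^1\rho^{n-1}F g(w)\,d\rho$ only decreases as $r\to 0$. Nothing can cancel it, and boundedness of $w$ (possible when $m>n$) is irrelevant.

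Suppose $u$ and $fg(u)$ lie in $L_1(B_1)$. Test with a radial $\varphi$ such that $0\le\varphi\le 1$, $\varphi\equiv 1$ on $B_\delta$ and $\operatorname{supp}\varphi\subset B_{2\delta}$. This gives $\int_{B_1}u\,\Delta^{m/2}\varphi\le |\partial B_1|\,(\Delta^{m/2-1}w)'(1)<0$, while $\int_{B_1} fg(u)\varphi\to 0$ as $\delta\to 0$.

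This covers every case, including $Fg(w)\notin L_1(B_1)$. That case is not ``immediately'' non-removable, because the definition involves $fg(|u|)$, not $Fg$. The paper sidesteps all of this by adding $r^{-(m+n)}$ to $F$, which forces $u\ge g(1)2^{-m-n}|x|^{-n}\notin L_1(B_1)$.
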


The proof of Theorem~\ref{T2.1} relies on the following statement.

\begin{Theorem}\label{T2.2}
Let $g : (0, \infty) \to (0, \infty)$ be a non-decreasing continuous function satisfying condition~\eqref{T2.1.1}. Then the problem
\begin{equation}
	\left(
		\frac{
			1
		}{
			r^{n-1}
		}
		\frac{d}{dr}
		\left(
			r^{n-1}
			\frac{d}{dr}
		\right)
	\right)^{m/2}
	w
	\ge
	F (r)
	g (1 + w),
	\quad
	w^{(i)} (1) = 0,
	\;
	i = 0, 1, \ldots, m - 1,
	\label{T2.2.1}		
\end{equation}
where $m$ is a positive even integer,
has a solution on the whole interval $(0, 1]$ for any $F \in L_{loc, \infty} ((0, 1])$.
In so doing, this solution is a positive decreasing function on $(0, 1)$.
\end{Theorem}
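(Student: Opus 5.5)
Throughout, write $L = r^{1-n}\frac{d}{dr}\bigl(r^{n-1}\frac{d}{dr}\bigr)$ and $k = m/2$. The plan is to build $w$ as a solution of the corresponding \emph{equation}, with $F$ replaced by a nonnegative majorant, integrated backwards from $r=1$, and to show that it does not blow up at any $r_0 \in (0,1)$. Condition~\eqref{T2.1.1} is used only in the non-blow-up step.

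\textbf{Step 1: reduction to an integral equation.} Put $\tilde F(r) = \operatorname*{ess\,sup}_{[r,1]} |F| + 1$. This function is finite on $(0,1]$, nonincreasing and positive, and $\tilde F g(1+w) \ge F g(1+w)$ wherever $w \ge 0$. For $h \ge 0$, the unique solution of $Lv = h$ with $v(1)=v'(1)=0$ is
$$
	v(r) = (Th)(r) = \int_r^1 t^{1-n}\int_t^1 s^{n-1}h(s)\,ds\,dt .
$$
It is nonnegative, nonincreasing, and $Th$ is monotone in $h$. The problem $L^{k}w = \tilde F g(1+w)$ with $w^{(i)}(1)=0$, $i=0,\dots,m-1$, is therefore equivalent to
$$
	w = T^{k}\bigl[\tilde F g(1+w)\bigr].
$$
Since $g$ is continuous, Peano's theorem (or a Schauder fixed point argument on $[1-\delta,1]$) gives a local solution. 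Continue it to a maximal interval $(r_0,1]$ along which it stays finite.

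\textbf{Step 2: positivity and monotonicity.} Every iterate $L^{j}w$, $j<k$, equals $T^{k-j}$ applied to a positive function. Hence $w>0$ and $w$ is decreasing on $(r_0,1)$. Also, with $\rho = 1-r$, all the functions $L^j w$ are nondecreasing in $\rho$. Because $\tilde F g(1+w)\ge F g(1+w)$, this $w$ solves \eqref{T2.2.1}. It remains to show that $r_0 = 0$.

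\textbf{Step 3: no blow-up at $r_0>0$ (the main obstacle).} Suppose $r_0>0$, so $w(r)\to\infty$ as $r\downarrow r_0$. On $[r_0,1]$ we have $\tilde F \le M$ and $t^{1-n}s^{n-1}\le r_0^{1-n}$. Thus $u(\rho) = w(1-\rho)$ satisfies
$$
	u \le K\, I^{m}\bigl[g(1+u)\bigr],
$$
where $I$ is integration from $0$ and $K = M r_0^{(1-n)k}$. By monotone comparison, $u \le U$, where $U$ solves the autonomous problem $U^{(m)} = K g(1+U)$ with zero data at $0$. All derivatives of $U$ are nonnegative and nondecreasing, so $U$ also blows up at some $\rho^*\le 1-r_0$. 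I then show that $U$ cannot blow up in finite $\rho$, using~\eqref{T2.1.1}.

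Let $\rho_j$ be the points where $U(\rho_j) = 2^j$, and set $h_j = \rho_{j+1}-\rho_j$. I want the lower bound
$$
	h_j \ge c\,\bigl(2^j/g(2^{j+2})\bigr)^{1/m}.
$$
Summing it gives $\sum_j h_j \ge c'\int^\infty g^{-1/m}(\zeta)\zeta^{1/m-1}\,d\zeta = \infty$ (a Cauchy condensation argument, using that $g$ is nondecreasing), which contradicts $\rho^*<\infty$. To get the lower bound, expand by Taylor at $\rho_j$:
$$
	2^j \le U(\rho_{j+1}) - U(\rho_j) \le \sum_{1\le i<m} U^{(i)}(\rho_j)\frac{h_j^i}{i!} + K g(1+2^{j+1})\frac{h_j^m}{m!}.
$$
The difficulty is bounding the middle derivatives $U^{(i)}(\rho_j)$. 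For these I would first try an energy identity: multiply $U^{(m)} = Kg(1+U)$ by $U'$ and integrate. For $m=2$ this gives $U'^2 \le 2K G(U)$ with $G(\zeta)=\int_0^\zeta g(1+s)\,ds$. For general even $m$ I would use an interpolation inequality for functions with monotone derivatives, of the form
$$
	U^{(i)}(\rho_j) \le C\, U(\rho_{j+1})^{1-i/m}\bigl(g(1+U(\rho_{j+1}))\bigr)^{i/m},
$$
obtained by Taylor expansion forward over a step of length $(U/g)^{1/m}$. Inserting this into the Taylor bound yields $h_j \gtrsim (2^j/g(2^{j+2}))^{1/m}$, which is what is needed. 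Establishing this interpolation estimate is the step I expect to require the most care.
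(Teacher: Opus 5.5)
Your argument is correct in outline, but it takes a different route from the paper. The paper never uses a continuation alternative. It builds an explicit global supersolution $v$ on all of $(0,1]$ by gluing a fixed point $w_0 = \beta A_{m/2} w_0$ on $[r_0,1]$ to the solution of the first-order problem $-(v_0^{1/m})' = \alpha r^{-(n+1)/2} F^{1/m}(r/2)\, g^{1/m}(1+\beta v_0)$. Global existence of that solution on $(0,r_0]$ follows from \eqref{T2.1.1} by separation of variables. The paper then proves $v \ge A_{m/2} v$ by induction over the levels $v(r) \le \beta^k v(r_0)$, using the one-step estimate \eqref{PT2.2.8}. Finally it obtains $w$ as the monotone limit of $w_{i+1} = A_{m/2} w_i$, $w_1 = 0$, which stays below $v$.

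You instead reduce everything to excluding blow-up at some $r_0 > 0$. There $F$ and the weight $(s/t)^{n-1}$ are bounded, so the problem collapses to the constant-coefficient equation $U^{(m)} = K g(1+U)$, and \eqref{T2.1.1} enters through a dyadic escape-time sum. What your route buys: it shows that the behaviour of $F$ near $0$ plays no role in existence, and that \eqref{T2.1.1} is exactly the non-blow-up condition for $U^{(m)} = g(U)$. What the paper's route buys: it must carry $F$ and $r^{1-n}$ down to $0$ (hence $\alpha$, $\beta$, $r^{-(n+1)/2}$ and $F(r/2)$), but it needs no continuation theory and yields an explicit majorant $w \le v$ near $0$. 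The underlying mechanism is the same. The paper's \eqref{PT2.2.3}--\eqref{PT2.2.4}, which give $h^{2i} A_{m/2-i} v(r+(2i+1)h) \le A_{m/2} v(r+h)$, are its form of your interpolation inequality. Likewise \eqref{PT2.2.8} is the analogue of your Taylor bound for $U(\rho_{j+1}) - U(\rho_j)$.

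The step you flagged does go through, with one condition on the window of length $\delta = (U(\rho_j)/U^{(m)}(\rho_j))^{1/m}$. It must \emph{end} at a point where $U$ is known, not start at $\rho_j$. A forward step from $\rho_j$ needs control of $U$ beyond $\rho_{j+1}$ precisely when $h_j < \delta$, which is the case you want to exclude. Handle the two cases as follows:
\begin{itemize}
\item If $\delta \le \rho_j$, expand from $\rho_j - \delta$ to $\rho_j$; all terms are nonnegative, so $U^{(l)}(\rho_j-\delta) \le l!\, U(\rho_j)\delta^{-l}$. Re-expanding $U^{(i)}$ over the same window gives $U^{(i)}(\rho_j) \le C (U(\rho_j)\delta^{-i} + U^{(m)}(\rho_j)\delta^{m-i})$.
\item If $\delta > \rho_j$, the zero data give directly $U^{(i)}(\rho_j) \le U^{(m)}(\rho_j)\rho_j^{m-i}/(m-i)!$.
\end{itemize}
Either way $U^{(i)}(\rho_j) \le C\, U(\rho_j)^{1-i/m} (U^{(m)}(\rho_j))^{i/m}$, which is slightly stronger than what you need.

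One smaller point concerns the comparison. Since $g$ is only continuous, the autonomous problem need not have a unique solution, so the ``monotone comparison'' $u \le U$ needs a strict supersolution. It is simpler to run the escape-time argument directly on $V = K I^m[g(1+u)] \ge u$. This $V$ has zero data, nonnegative nondecreasing derivatives, and $V^{(m)} \le K g(1+V)$, which is all your Taylor and interpolation estimates use.
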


\begin{proof}
Without loss of generality, it can be assumed that $F$ is a positive non-increasing function on $(0, 1]$; otherwise we replace $F$ by
$$
	F_{max} (r)
	=
	\max
	\left\{ 
		\sup_{(r, 1)}
		F,
		1
	\right\}.
$$
We prove the theorem in several steps.
\paragraph{Step 1} 
Let us define mappings $A_i : C ((0, 1]) \to C ((0, 1])$ as follows:
$$
	A_0 v (r) = F (r) g (1 +|v (r)|)
$$
and
$$
	A_i v (r)
	=
	\int_r^1
	\int_\rho^1
	\left(
		\frac{\xi}{\rho}
	\right)^{n - 1}
	A_{i - 1} v (\xi)
	d\xi
	d\rho,
	\quad
	i = 1, \ldots, m / 2,
$$
For any $v \in C ((0, 1])$ the functions $A_i v$ are positive and decreasing on the interval $(0, 1)$.
Let us extend $A_i v$ on the whole set $(0, \infty)$ by putting $A_i v (r) = 0$ for all $r \in (1, \infty)$, $i = 0, \ldots, m / 2$.

Assume that $r > 0$ and $h > 0$ are some real numbers.
We obviously have
$$
	A_i v (r + h)
	=
	\int_{r + h}^\infty
	\int_\rho^\infty
	\left(
		\frac{\xi}{\rho}
	\right)^{n - 1}
	A_{i-1} v (\xi)
	d\xi
	d\rho
	\ge
	\int_{r + h}^{r + 2 h}
	\int_{r + 2 h}^\infty
	\left(
		\frac{\xi}{\rho}
	\right)^{n - 1}
	A_{i-1} v (\xi)
	d\xi
	d\rho,
$$
whence in accordance with the estimate
$$
	\int_{r + h}^{r + 2 h}
	\int_{r + 2 h}^\infty
	\left(
		\frac{\xi}{\rho}
	\right)^{n - 1}
	A_{i-1} v (\xi)
	d\xi
	d\rho
	\ge
	\frac{
		h
	}{
		(r + 2 h)^{n - 1}
	}
	\int_{r + 2 h}^\infty
	\xi^{n - 1}
	A_{i-1} v (\xi)
	d\xi
$$
it follows that
\begin{equation}
	A_i v (r + h)
	\ge
	\frac{
		h
	}{
		(r + 2 h)^{n - 1}
	}
	\int_{r + 2 h}^\infty
	\xi^{n - 1}
	A_{i-1} v (\xi)
	d\xi,
	\quad
	i = 1, \ldots, m / 2,
	\label{PT2.2.1}
\end{equation}
for any $v \in C ((0, 1])$.
Since
$$
	\int_{r + 2 h}^\infty
	\xi^{n - 1}
	A_{i-1} v (\xi)
	d\xi
	\ge
	\int_{r + 2 h}^{r + 3 h}
	\xi^{n - 1}
	A_{i-1} v (\xi)
	d\xi
	\ge
	h
	(r + 2 h)^{n - 1}
	A_{i-1} v (r + 3 h),
$$
formula~\eqref{PT2.2.1} yields
\begin{equation}
	A_i v (r + h)
	\ge
	h^2
	A_{i - 1} v (r + 3 h),
	\quad
	i = 1, \ldots, m / 2.
	\label{PT2.2.2}
\end{equation}
Iterating~\eqref{PT2.2.1}, we also obtain
\begin{equation}
	A_i v (r + h)
	\ge
	\frac{
		h^{2 j - 1}
	}{
		(r + 2 j h)^{n - 1}
	}
	\int_{r + 2 j h}^\infty
	\xi^{n - 1}
	A_{i - j} v (\xi)
	d\xi,
	\quad
	1 \le j \le i,
	\quad
	i = 1, \ldots, m / 2,
	\label{PT2.2.3}
\end{equation}
for any $v \in C ((0, 1])$.
Analogously,~\eqref{PT2.2.2} implies that
\begin{equation}
	A_i v (r + h)
	\ge
	h^{2 j}
	A_{i - j} v (r + (2j + 1) h),
	\quad
	1 \le j \le i,
	\quad
	i = 1, \ldots, m / 2,
	\label{PT2.2.4}
\end{equation}

At the same time,
\begin{align*}
	&
	A_i v (r) - A_i v (r + j h)
	=
	\int_r^{r + j h}
	\int_\rho^\infty
	\left(
		\frac{\xi}{\rho}
	\right)^{n - 1}
	A_{i-1} v (\xi)
	d\xi
	d\rho
	\\
	&
	\quad
	{}
	=
	\int_r^{r + j h}
	\int_\rho^{r + (j + 1) h}
	\left(
		\frac{\xi}{\rho}
	\right)^{n - 1}
	A_{i-1} v (\xi)
	d\xi
	d\rho
	\\
	&
	\quad
	\phantom{{}={}}
	{}
	+
	\int_r^{r + j h}
	\int_{r + (j + 1) h}^\infty
	\left(
		\frac{\xi}{\rho}
	\right)^{n - 1}
	A_{i-1} v (\xi)
	d\xi
	d\rho,
	\quad
	1 \le j \le i - 1,
	\quad
	i = 1, \ldots, m / 2,
\end{align*}
whence in accordance with the inequalities
$$
	\int_r^{r + j h}
	\int_\rho^{r + (j + 1) h}
	\left(
		\frac{\xi}{\rho}
	\right)^{n - 1}
	A_{i-1} v (\xi)
	d\xi
	d\rho
	\le
	j (j + 1)
	\left(
		\frac{r + (j + 1) h}{r}
	\right)^{n - 1}
	h^2
	A_{i - 1} v (r)
$$
and
$$
	\int_r^{r + j h}
	\int_{r + (j + 1) h}^\infty
	\left(
		\frac{\xi}{\rho}
	\right)^{n - 1}
	A_{i-1} v (\xi)
	d\xi
	d\rho
	\le
	\frac{
		j h
	}{
		r^{n - 1}
	}
	\int_{r + (j + 1) h}^\infty
	\xi^{n - 1}
	A_{i-1} v (\xi)
	d\xi
$$
we have
\begin{align}
	&
	A_i v (r) - A_i v (r + j h)
	\le
	j (j + 1)
	\left(
		\frac{r + (j + 1) h}{r}
	\right)^{n - 1}
	h^2
	A_{i - 1} v (r)
	\nonumber
	\\
	&
	\quad
	{}
	+
	\frac{
		j h
	}{
		r^{n - 1}
	}
	\int_{r + (j + 1) h}^\infty
	\xi^{n - 1}
	A_{i-1} v (\xi)
	d\xi,
	\quad
	1 \le j \le i - 1,
	\quad
	i = 1, \ldots, m / 2.
	\label{PT2.2.5}
\end{align}
Now, assume that $0 < h < 2 r / m$. In this case,~\eqref{PT2.2.5} yields
\begin{align}
	&
	A_i v (r) - A_i v (r + j h)
	\le
	\lambda
	h^2
	A_{i - 1} v (r)
	\nonumber
	\\
	&
	\quad
	{}
	+
	\frac{
		\mu h
	}{
		r^{n - 1}
	}
	\int_{r + (j + 1) h}^\infty
	\xi^{n - 1}
	A_{i-1} v (\xi)
	d\xi,
	\quad
	1 \le j \le i - 1,
	\quad
	i = 1, \ldots, m / 2,
	\label{PT2.2.6}
\end{align}
where
$$
	\lambda
	=
	\left(
		\frac{m}{2}
	\right)^2
	2^{n - 1}
	\quad
	\mbox{and}
	\quad
	\mu
	=
	\frac{m}{2}.
$$
In particular,
$$
	A_{m / 2} v (r) - A_{m / 2} v (r + h)
	\le
	\lambda
	h^2
	A_{m / 2 - 1} v (r)
	+
	\frac{
		\mu h
	}{
		r^{n - 1}
	}
	\int_{r + 2 h}^\infty
	\xi^{n - 1}
	A_{m / 2 - 1} v (\xi)
	d\xi.
$$
Combining this with the estimate
\begin{align*}
	A_{m / 2 - 1} v (r) 
	\le
	{}
	& 
	A_{m / 2 - 1} v (r + 3 h)
	+
	\lambda
	h^2
	A_{m / 2 - 2} v (r)
	\\
	&
	{}
	+
	\frac{
		\mu h
	}{
		r^{n - 1}
	}
	\int_{r + 4 h}^\infty
	\xi^{n - 1}
	A_{m / 2 - 2} v (\xi)
	d\xi
\end{align*}
which also follows from~\eqref{PT2.2.6}, we obtain
\begin{align*}
	&
	A_{m / 2} v (r) - A_{m / 2} v (r + h)
	\le
	\lambda
	h^2
	A_{m / 2 - 1} v (r + 3 h)
	+
	\lambda^2
	h^4
	A_{m / 2 - 2} v (r)
	\\
	&
	\quad
	{}
	+
	\frac{
		\mu h
	}{
		r^{n - 1}
	}
	\int_{r + 2 h}^\infty
	\xi^{n - 1}
	A_{m / 2 - 1} v (\xi)
	d\xi
	+
	\frac{
		\lambda 
		\mu 
		h^3
	}{
		r^{n - 1}
	}
	\int_{r + 4 h}^\infty
	\xi^{n - 1}
	A_{m / 2 - 2} v (\xi)
	d\xi.
\end{align*}
Estimating $A_{m / 2 - i} v (r)$ successively by~\eqref{PT2.2.6} with $i = 2, \ldots, m / 2 - 1$ and $j = 2 i + 1$, we arrive at the inequality
\begin{align}
	A_{m / 2} v (r) - A_{m / 2} v (r + h)
	\le
	{}
	&
	\sum_{i = 1}^{m / 2 - 1}
	\lambda^i h^{2 i}
	A_{m / 2 - i} v (r + (2 i + 1) h)
	+
	\lambda^{m / 2}
	h^m
	A_0 v (r)
	\nonumber
	\\
	&
	\quad
	{}
	+
	\sum_{i = 1}^{m / 2}
	\frac{
		\lambda^{i - 1}
		\mu
		h^{2 i - 1}
	}{
		r^{n - 1}
	}
	\int_{r + 2 i h}^\infty
	\xi^{n - 1}
	A_{m / 2 - i} v (\xi)
	d\xi.
	\label{PT2.2.7}
\end{align}
From~\eqref{PT2.2.3}, it follows that
\begin{align*}
	A_{m / 2} v (r + h)
	&
	{}
	\ge
	\frac{
		h^{2 i - 1}
	}{
		(r + 2 i h)^{n - 1}
	}
	\int_{r + 2 i h}^\infty
	\xi^{n - 1}
	A_{m / 2 - i} v (\xi)
	d\xi
	\\
	&
	{}
	\ge
	\frac{
		h^{2 i - 1}
	}{
		2^{n - 1}
		r^{n - 1}
	}
	\int_{r + 2 i h}^\infty
	\xi^{n - 1}
	A_{m / 2 - i} v (\xi)
	d\xi
	\quad
	1 \le i \le m / 2,
\end{align*}
Analogously,~\eqref{PT2.2.4} yields
$$
	A_{m / 2} v (r + h)
	\ge
	h^{2 i}
	A_{m / 2 - i} v (r + (2 i + 1) h),
	\quad
	i = 1, \ldots, m / 2.
$$
Thus,~\eqref{PT2.2.7} allows us to assert that
\begin{align}
	A_{m / 2} v (r)
	&
	{}
	\le
	\left(
		1
		+
		\sum_{i = 1}^{m / 2 - 1}
		\lambda^i
		+
		2^{n - 1}
		\mu
		\sum_{i = 1}^{m / 2}
		\lambda^{i - 1}
	\right)
	A_{m / 2} v (r + h)
	+
	\lambda^{m / 2}
	h^m
	A_0 v (r)
	\nonumber
	\\
	&
	{}
	\le
	\frac{
		\lambda^{m / 2} - 1
	}{
		\lambda - 1
	}
	(1 + 2^{n - 1} \mu)
	A_{m / 2} v (r + h)
	+
	\lambda^{m / 2}
	h^m
	A_0 v (r).
	\label{PT2.2.8}
\end{align}

\paragraph{Step 2}
We take
\begin{equation}
	\alpha 
	= 
	\max
	\left\{
		2^{(n - 1) / 2}
		m,
		2^{1 / m}
		\lambda^{1 / 2}
	\right\}
	\quad
	\mbox{and}
	\quad
	\beta 
	=
	\frac{
		\lambda^{m / 2} - 1
	}{
		\lambda - 1
	}
	(2 + 2^n \mu).
	\label{PT2.2.9}
\end{equation}
It can easily be seen that $\beta A_{m/2}$ is a completely continuous nonlinear operator in the space 
$
	W 
	= 
	\{
		w \in C ([r_0, 1])
		:
		\| w \|_{
			C ([r_0, 1])
		}
		\le
		1
	\}
$
for some real number $1 / 2 < r_0 < 1$.
Thus, by the Bohl--Brouwer fixed point theorem, there exists a function $v_0 \in V$ such that $w_0 = \beta A_{m/2} w_0$ or, in other words,
\begin{equation}
	w_0 (r)
	=
	\beta 
	A_{m / 2} w_0 (r)
	\label{PT2.2.10}
\end{equation}
for all $r \in [r_0, 1]$.
Since $w_0$ is a positive function on $(0, 1]$, we have $w_0 (r_0) > 0$.

Further, let $v_0$ be a solution of the Cauchy problem
\begin{equation}
	- \frac{
		d
	}{
		d r
	}
	v_0^{1 / m} (r)
	=
	\frac{
		\alpha
	}{
		r^{(n + 1) / 2}
	}
	F^{1 / m} 
	\left(
		\frac{r}{2}
	\right)
	g^{1 / m} (1 + \beta v_0)
	\quad
	\mbox{on } (0, r_0],
	\quad
	v_0 (r_0) = w_0 (r_0).
	\label{PT2.2.11}
\end{equation}
To verify that~\eqref{PT2.2.11} has a solution on the whole interval $(0, r_0]$, we denote $u_0 = v_0^{1/n}$. Then~\eqref{PT2.2.11} takes the form
$$
	- \frac{
		d
	}{
		d r
	}
	u_0 (r)
	=
	\frac{
		\alpha
	}{
		r^{(n + 1) / 2}
	}
	F^{1 / m}
	\left(
		\frac{r}{2}
	\right)
	g^{1 / m} (1 + \beta u_0^m),
	\quad
	u_0 (r_0) = v_0^m (r_0).
$$
Integrating this, we obtain
$$
	\int_{
		u_0 (r_0)
	}^{
		u_0 (r)
	}
	g^{- 1 / m} (1 + \beta t^m)
	d t
	=
	\alpha
	\int_r^{r_0}
	F^{1 / m}
	\left(
		\frac{\xi}{2}
	\right)
	\frac{
		d\xi
	}{
		\xi^{(n + 1) / 2}
	},
$$
whence after the changing of variables $\zeta = 1 + \beta t^m$, it follows that
$$
	\frac{
		1
	}{
		m \beta^{1 / m}
	}
	\int_{
		1 + \beta u_0^m (r_0)
	}^{
		1 + \beta u_0^m (r)
	}
	g^{- 1 / m} (\zeta)
	(\zeta - 1)^{1 / m - 1}
	d \zeta
	=
	\alpha
	\int_r^{r_0}
	F^{1 / m}
	\left(
		\frac{\xi}{2}
	\right)
	\frac{
		d\xi
	}{
		\xi^{(n + 1) / 2}
	}.
$$
In view of~\eqref{T2.1.1}, the left-hand side of the last equality tends to infinity as $u_0 (r) \to \infty$. This, in turn, means that the solution $v_0$ of Cauchy problem~\eqref{PT2.2.10} is defined for all $r \in (0, r_0]$.
Moreover, $v_0$ is a positive decreasing function on $(0, r_0]$.

\paragraph{Step 3}
We put
$$
	v (r)
	=
	\left\{
		\begin{aligned}
			&
			v_0 (r),
			&&
			r \in (0, r_0),
			\\
			&
			w_0 (r),
			&&
			r \in [r_0, 1].
		\end{aligned}
	\right.
$$
Since $v$ is a positive decreasing function on the interval $(0, 1)$, by induction on $m$ it is easy to show that
\begin{equation}
	A_{m/2} v (r)
	\le
	\frac{
		1
	}{
		r^{(n - 1) m / 2}
	}
	F (r) 
	g (1 + v (r)).
	\label{PT2.2.13}
\end{equation}
Let us prove that $v$ satisfies the inequality
\begin{equation}
	v (r)
	\ge
	A_{m/2} v (r)
	\label{PT2.2.12}
\end{equation}
for all $r \in (0, 1]$.
If $r \in [r_0, 1]$, then~\eqref{PT2.2.12} immediately follows from~\eqref{PT2.2.10}.

Assume that $r \in (0, r_0)$ and $k$ is the minimal positive integer satisfying the relation $v (r) \le \beta^k v (r_0)$. 
We prove~\eqref{PT2.2.12} by induction on $k$.
At first, let $k = 1$ and $h = r_0 - r \ge 2 r / m$.
Integrating~\eqref{PT2.2.11}, we have
$$
	v_0^{1 / m} (r) - v_0^{1 / m} (r_*)
	=
	\int_r^{r_*}
	\frac{
		\alpha
	}{
		\xi^{(n + 1) / 2}
	}
	F^{1 / m} 
	\left(
		\frac{\xi}{2}
	\right)
	g^{1 / m} (1 + \beta v_0)
	d\xi,
$$
where $r_* = \min \{r_0, 2 r \}$,
whence it follows that
\begin{align*}
	v (r)
	&
	{}
	\ge
	\left(
		\int_r^{r_*}
		\frac{
			\alpha
		}{
			\xi^{(n + 1) / 2}
		}
		F^{1 / m} 
		\left(
			\frac{\xi}{2}
		\right)
		g^{1 / m} (1 + \beta v)
		d\xi
	\right)^m
	\\
	&
	{}
	\ge
	\left(
		\frac{
			\alpha 
			(r_* - r)
		}{
			r_*^{(n + 1) / 2}
		}
		F^{1 / m} 
		\left(
			\frac{r_*}{2}
		\right)
		g^{1 / m} (1 + \beta v (r_*))
	\right)^m.	
\end{align*}
Since $r_* - r \ge 2 r / m$, $r_* \le  2 r$, and $\beta v (r_*) \ge \beta v (r_0) \ge v (r)$, this implies the inequality
\begin{equation}
	v (r)
	\ge
	\left(
		\frac{
			\alpha
		}{
			2^{(n - 1) / 2}
			m
		}
	\right)^m
	\frac{
		1
	}{
		r^{(n - 1) m / 2}
	}
	F (r)
	g (1 + v (r)).
	\label{PT2.2.14}
\end{equation}
In its turn, combining~\eqref{PT2.2.14} with~\eqref{PT2.2.9} and~\eqref{PT2.2.13}, we obtain~\eqref{PT2.2.12}.

Now, let $k = 1$ and $h = r_0 - r < 2 r / m$.
From~\eqref{PT2.2.8}, it follows that
\begin{equation}
	A_{m / 2} v (r)
	\le
	\frac{
		\lambda^{m / 2} - 1
	}{
		\lambda - 1
	}
	(1 + 2^{n - 1} \mu)
	A_{m / 2} v (r_0)
	+
	\lambda^{m / 2}
	h^m
	A_0 v (r).
	\label{PT2.2.15}
\end{equation}
In so doing, by~\eqref{PT2.2.10}, we have
$$
	v (r) \ge v (r_0) = \beta A_{m / 2} v (r_0).
$$
In view of~\eqref{PT2.2.9}, this yields
\begin{equation}
	\frac{1}{2}
	v (r)
	\ge
	\frac{
		\lambda^{m / 2} - 1
	}{
		\lambda - 1
	}
	(1 + 2^{n - 1} \mu)
	A_{m / 2} v (r_0).
	\label{PT2.2.16}
\end{equation}
Integrating~\eqref{PT2.2.11}, we also obtain
$$
	v_0^{1 / m} (r) - v_0^{1 / m} (r_0)
	=
	\int_r^{r_0}
	\frac{
		\alpha
	}{
		\xi^{(n + 1) / 2}
	}
	F^{1 / m} 
	\left(
		\frac{\xi}{2}
	\right)
	g^{1 / m} (1 + \beta v_0)
	d\xi,
$$
whence it follows that
\begin{align*}
	v (r)
	&
	{}
	\ge
	\left(
		\int_r^{r_0}
		\frac{
			\alpha
		}{
			\xi^{(n + 1) / 2}
		}
		F^{1 / m} 
		\left(
			\frac{\xi}{2}
		\right)
		g^{1 / m} (1 + \beta v)
		d\xi
	\right)^m
	\\
	&
	{}
	\ge
	\left(
		\frac{
			\alpha 
			h
		}{
			r_0^{(n + 1) / 2}
		}
		F^{1 / m} 
		\left(
			\frac{r_0}{2}
		\right)
		g^{1 / m} (1 + \beta v (r_0))
	\right)^m.	
\end{align*}
Since $0 < r_0 < 1$, $r_0 / 2 = (r + h) / 2 < r$, and $\beta v (r_0) \ge v (r)$, this implies the estimate
$$
	v (r)
	\ge
	\alpha^m
	h^m
	F (r)
	g (1 + v (r))
	=
	\alpha^m
	h^m
	A_0 v (r);
$$
therefore, taking into account~\eqref{PT2.2.9}, one can assert that
\begin{equation}
	\frac{1}{2}
	v (r)
	\ge
	\lambda^{m / 2}
	h^m
	A_0 v (r).
	\label{PT2.2.17}
\end{equation}
Thus, combining the last inequality with~\eqref{PT2.2.15} and~\eqref{PT2.2.16}, we arrive at~\eqref{PT2.2.12}.

Assume further that~\eqref{PT2.2.12} is valid for all $k \le l$, where $l$ is a positive integer. Let us show that~\eqref{PT2.2.12} is also valid for $k = l + 1$. We take a real number $r < r_1 < r_0$ satisfying the condition $v (r) = \beta v (r_1)$.
By the induction hypothesis, we obviously have
\begin{equation}
	v (r_1) \ge A_{m / 2} v (r_1).
	\label{PT2.2.18}
\end{equation}

If $h = r - r_1 < 2 r / m$, then repeating the above argument with $r_0$ replaced by $r_1$, we obtain~\eqref{PT2.2.14}. 
In view~\eqref{PT2.2.9} and~\eqref{PT2.2.13}, this immediately implies~\eqref{PT2.2.12}.
Let $h = r - r_1 \ge 2 r / m$.
In this case, repeating the argument given in the proof of~\eqref{PT2.2.15} with $r_0$ replaced by $r_1$, we have
\begin{equation}
	A_{m / 2} v (r)
	\le
	\frac{
		\lambda^{m / 2} - 1
	}{
		\lambda - 1
	}
	(1 + 2^{n - 1} \mu)
	A_{m / 2} v (r_1)
	+
	\lambda^{m / 2}
	h^m
	A_0 v (r).
	\label{PT2.2.19}
\end{equation}
At the same time,~\eqref{PT2.2.18} allows us to assert that
$$
	v (r) = \beta v (r_1) \ge \beta A_{m / 2} v (r_1),
$$
whence in accordance with~\eqref{PT2.2.9} we obtain
\begin{equation}
	\frac{1}{2}
	v (r)
	\ge
	\frac{
		\lambda^{m / 2} - 1
	}{
		\lambda - 1
	}
	(1 + 2^{n - 1} \mu)
	A_{m / 2} v (r_1).
	\label{PT2.2.20}
\end{equation}
It is also clear that the proof of~\eqref{PT2.2.17} remains valid if $r_0$ is replaced by $r_1$.
Thus, combining~\eqref{PT2.2.19} with~\eqref{PT2.2.17} and~\eqref{PT2.2.20}, we again arrive at~\eqref{PT2.2.12}.

\paragraph{Step 4}
Let us prove that the equation
\begin{equation}
	w (r) = A_{m / 2} w (r)
	\label{PT2.2.21}
\end{equation}
has a solution on the whole interval $(0, 1]$. 
Indeed, we put $w_1 = 0$ and
$$
	w_{i + 1} (r) = A_{m / 2} w_i (r),
	\quad
	i = 1, 2, \ldots.
$$
By induction on $i$, it can be shown that 
$$
	w_i (r) \le w_{i + 1} (r) \le v (r) 
	\quad
	\mbox{for all } r \in (0, 1], 
	\quad
	i = 1, 2, \ldots.
$$
Thus, there is a function $w : (0, 1] \to [0, \infty)$ such that $w_i (r) \to w (r)$ for all $r \in (0, 1]$ as $i \to \infty$.
By Lebesgue's dominated convergence theorem, this function satisfies equation~\eqref{PT2.2.21} on the whole interval $(0, 1]$.
To complete the proof, it remains to verify by direct differentiation that $w$ is a solution of problem~\eqref{T2.2.1}.
\end{proof} 

\begin{proof}[Proof of Theorem~\ref{T2.1}]
We put 
$$
	F (r)
	=
	\frac{
		1
	}{
		r^{m + n}
	}
	+
	\operatorname*{ess\,sup}_{
		B_1 \setminus B_{r / 2}
	}
	|f|.
$$
By Theorem~\ref{T2.2}, there exists a solution $w : (0, 1] \to [0, \infty)$ of problem~\eqref{T2.2.1} which is a positive decreasing function on the interval $(0, 1)$.
Representing the Laplace operator in multidimensional polar coordinates, it is easy to verify that
$$
	u (x)
	=
	w (|x|)
$$
is a solution of inequality~\eqref{T2.1.2}.
Integrating~\eqref{T2.2.1}, we obviously arrive at~\eqref{PT2.2.21}, whence it follows that
$$
	w (r)
	\ge
	\int_r^{2 r}
	d\rho_1
	\int_{\rho_1}^{2 r}
	d\rho_2 
	\int_{\rho_2}^{2 r}
	d\rho_3
	\ldots
	\int_{\rho_{m - 1}}^{2 r}
	F (\rho_m) g (1 + w)
	d\rho_m
	\ge
	r^m
	F (2 r)
	g (1)
$$
for all $r \in (0, 1 / 2)$. This, in turn, yields
$$
	u (x)
	\ge
	\frac{
		g (1)
	}{
		2^{m + n}
		|x|^n
	}
$$
for all $x \in B_{1 / 2} \setminus \{ 0 \}$. Thus, $u \not\in L_1 (B_1)$, i.e. $u$ has a non-removable singularity at zero.
\end{proof}

\end{document}